\theoremstyle{plain}
\newtheorem {lemma}{Lemma} 
\newtheorem {theorem}[lemma]{Theorem}
\newtheorem {proposition}[lemma]{Proposition}
\theoremstyle{definition}
\newtheorem {remark}[lemma]{Remark}
\theoremstyle{definition}
\newtheorem{deff}[lemma]{Definition}{}
\newcommand{\M}{\operatorname{\mathbb M}}
\newcommand{\LL}{\operatorname{\mathcal L}}
\newcommand{\gr}{\operatorname{gr}}
\newcommand\Gr[1][]{{\operatorname{{Gr}^{#1}-}}}
\newcommand{\Ga}{\Gamma}
\newcommand{\ga}{\gamma}
\newcommand{\de}{\delta}
\newcommand{\la}{\lambda}
\newcommand{\Modd}{\operatorname{Mod-}}
\title{Leavitt path algebras are graded von Neumann regular rings}
\author{Roozbeh Hazrat}\address{
Centre for Research in Mathematics\\
University of Western Sydney\\
Australia}
\email{r.hazrat@uws.edu.au}
\subjclass[2000]{16D70} 
\keywords{Leavitt path algebras, von Neumann regular rings, graded von Neumann regular rings}
\begin{document}
\begin{abstract}
In sharp contrast to the Abrams-Rangaswamy Theorem that the only von Neumann regular Leavitt path algebras are exactly those associated to acyclic graphs, here we prove that the Leavitt path algebra of any arbitrary graph is a {\it graded} von Neumann regular ring. Several properties of Leavitt path algebras, such as triviality of the Jacobson radical, flatness of graded modules and finitely generated graded right (left) ideals being generated by an idempotent element,  follow as a consequence of general theory of grade von Neumann regular rings. 
 
\end{abstract}

\maketitle

 \setcounter{section}{1}

The von Neumann regular rings constitute an important class of rings. A unital ring $A$ is von Neumann regular (or regular for short), if for any $x \in A$, we have $x \in xAx$. 
There are several equivalent module theoretical definitions, such as $A$ is regular if and only if any module over $A$ is flat. Ken Goodearl's book~\cite{goodearlbook} is devoted to this class of rings. 
The definition extends to non-unital ring in an obvious manner.  

If a ring has a graded structure, one defines the graded version of regularity in a natural way: the graded ring $A$ is graded von Neumann regular (or graded regular for short), if for any homogeneous element $x\in A$, we have $x\in xAx$. Many of the module theoretic properties established for von Neumann regular rings can be extended to the graded setting; For example, $A$ is graded regular if and only if any graded module is flat.

To any arbitrary direct graph, one can associate a so called Leavitt path algebra~\cite{aap05,amp}. Despite being introduced less than a decade ago,  Leavitt path algebras have arisen in a variety of different contexts as diverse as analysis, symbolic dynamics, noncommutative geometry, representation theory, and number theory. One line of interests in these algebras is to find classes of graphs which give specific types of Leavitt path algebras.
 
In~\cite{abramsranga}, Abrams and Rangaswamy have shown that the Leavitt path algebra of an arbitrary graph is von Neumann regular if and only if the graph is acyclic (i.e., contains no cycle). 
Since Leavitt path algebras have canonical $\mathbb Z$-graded grading, it is natural to ask when these rings are graded von Neumann ring. In contrast with the Abrams-Rangaswamy result which restricts regular Leavitt path algebras to only acyclic graphs, here we prove that  (Theorem~\ref{sthfin3}) the Leavitt path algebra associated to any arbitrary graph is graded von Neumann regular.  Several properties of Leavitt path algebras, such as triviality of the Jacobson radical, flatness of graded modules and finitely generated graded right (left) ideals being generated by an idempotent element,  follow as a consequence of general theory of grade von Neumann regular rings. 

Let $A$ be a strongly graded ring. By Dade's theorem, $\Gr A$ is equivalent to $\Modd A_0$; Here $\Gr A$ is the category of graded right $A$-modules and $\Modd A_0$ is the category of right modules over the ring of zero component $A_0$  (see~\cite[\S1.5]{hazgrmon}). Since any (graded) flat module is a direct limit of (graded) projective modules, from the equivalence of categories above, it follows that  $A$ is graded regular if and only if $A_0$ is regular. On the other hand, for a finite graph with no sinks, its Leavitt path algebra is strongly graded (\cite[Theorem~1.6.9]{hazgrmon}). We also know that its zero component ring is an ultramatricial algebra which is regular (see the proof of~\cite[Theorem~5.3]{amp}). Putting these together we get that Leavitt path algebras of finite graphs with no sinks are graded regular ring. However in order to show that all Leavitt path algebras are graded regular we need a different approach.

 The idea of the proof (Theorem~\ref{sthfin3}) is as follows. We start off with finite graphs. We show that removing sources from a graph does not change the associated Leavitt path algebras up to graded Morita equivalent. Since the graded regularness is graded Morita invariant (Proposition~\ref{cafejen4may}), the proof reduces to  finite graphs with no sources. The Leavitt path algebras of finite graphs with no sources can be realised as corner skew Laurent polynomial rings (\S\ref{cornerskew}). For these rings Proposition~\ref{lanhc8} gives the condition when they are graded regular which is the case for Leavitt path algebras.  This proves the theorem for all finite graphs. Since a direct limit of graded regular rings is graded regular, the theorem extends to row finite graphs, as they can be written as a direct union of finite graphs. Assigning a suitable (non-canonical) grading to the Leavitt path algebra of a countable graph, we see that there is 
  a graded monomorphism from this algebra to its desingularization which is a row finite graph (\S\ref{hngtrere66}). This will let us extend the theorem to countable graphs. Invoking Goodearl's treatment of Leavitt path algebras of arbitrary graphs as a direct limit of countable graphs~\cite{goodearllpa}, the theorem will be extended to all graphs.

\subsection{Graded von Neumann regular rings}

Recall that a (not necessarily unital)  ring $A$ is called a
\emph{$\Ga$-graded ring}, or simply a \emph{graded ring},
if $ A= \textstyle{\bigoplus_{ \ga \in \Ga}} A_{\ga}$, where 
\index{graded ring} $\Ga$ is an (abelian) group, each $A_{\ga}$ is
an additive subgroup of $A$ and $A_{\ga}  A_{\delta} \subseteq
A_{\ga + \delta}$ for all $\ga, \delta \in \Ga$. 

The set $A^{h} =
\bigcup_{\ga \in \Ga} A_{\ga}$ is called the set of {\it homogeneous elements} of
$A$. The non-zero elements of $A_\ga$ are called \emph{homogeneous of degree $\ga$}
and we write deg$(a) = \ga$ if $a \in A_{\ga}\backslash \{0\}$. We call the set \[\Gamma_A=\{ \ga \in \Gamma \mid A_\ga \not = 0 \}\] the {\it support} of $A$. We say $A$ has a \emph{trivial grading}, or $A$ is \emph{concentrated in
degree zero} if the support of $A$ is the trivial group, i.e., $A_0=A$ and $A_\ga=0$ for $\ga \in \Gamma \backslash \{0\}$. For a $\Gamma$-graded unital ring $A$ (with identity element $1$), one can prove that  $1$ is a homogeneous  element of degree $0$,  $A_0$ is a subring of $A$ and for an invertible element $a \in A_{\ga}$, its inverse
$a^{-1}$ is homogeneous of degree $-\ga$, i.e., $a^{-1} \in A_{-\ga}$. 

A $\Gamma$-graded (not necessarily unital)    ring $A$ is called a {\it graded von Neumann regular} ring, if for any homogeneous element $x\in A$, there is $y\in A$ such that $xyx=x$. Note that $y$ can be chosen to be a homogeneous element. Throughout the note, we call such rings also {\it graded regular rings}. We refer the reader to 
\cite[C, I.5]{grrings} for a treatment of such rings. A ring $A$ with a subset $E$ of commuting idempotent such that for any $x\in A$, there exists $e\in E$ such that $ex=xe=x$ is called a ring with local units. If $A$ is a graded ring and $E$ consists of homogeneous elements (of degree zero), we call $E$ a {\it set of homogeneous local units}.

\begin{proposition} \label{pkhti1}
Let $A$ be a $\Gamma$-graded ring with a set of homogeneous local units. The following statements are equivalent. 

\begin{enumerate}[\upshape(1)]
\item  $A$ is a graded von Neumann regular ring;

\item Any finitely generated right (left) graded ideal of $A$ is generated by one homogeneous
idempotent;
\end{enumerate}

If $A$ is a graded ring with unit, then the above statements are equivalent to 

\begin{enumerate}[\upshape(3)]

\item   Any graded right (left) R-module is flat. 
\end{enumerate}
\end{proposition}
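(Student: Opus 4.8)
The plan is to prove the equivalence (1)$\Leftrightarrow$(2) directly, as the graded analogue of the classical fact that a finitely generated one‑sided ideal of a von Neumann regular ring is generated by an idempotent, and then to deduce (1)$\Leftrightarrow$(3) (for unital $A$) by transporting the usual homological characterisation of regularity to graded modules, as treated in \cite[C, I.5]{grrings}.

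For (1)$\Rightarrow$(2) I would argue with right ideals, the left case being symmetric. The first observation is that if $x\neq 0$ is homogeneous of degree $d$, graded regularity provides a homogeneous $y$ of degree $-d$ with $xyx=x$, so that $e:=xy$ is a homogeneous idempotent with $xA=eA$; note that any homogeneous idempotent has degree $0$, since $2\deg e=\deg(e^2)=\deg e$. Next, a finitely generated graded right ideal is generated by finitely many homogeneous elements (split a finite generating set into homogeneous components, which still lie in the ideal). So I would take $I=x_1A+\cdots+x_nA$ with the $x_i$ homogeneous and induct on $n$: by the inductive hypothesis and the first observation, $x_1A+\cdots+x_{n-1}A=eA$ for a homogeneous idempotent $e$ of degree $0$, so $I=eA+x_nA$. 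The heart of the step is an orthogonalisation. Choosing a common homogeneous local unit $u\in E$ fixing $e$ and $x_n$ and setting $y':=(u-e)x_n$, one gets $I=eA+y'A$ with $ey'=0$; picking a homogeneous $z$ with $y'zy'=y'$ and putting $f:=y'z$ yields a homogeneous idempotent $f$ of degree $0$ with $y'A=fA$, so $I=eA+fA$ with $ef=0$, and moreover $fef=y'z\,e\,y'z=0$ because $z\,e\,y'=z\,e\,(u-e)x_n=0$. A short computation using $ef=fef=0$ then shows that $g:=e+f-fe$ is a homogeneous idempotent of degree $0$ with $ge=e$ and $gf=f$, hence $gA=eA+fA=I$, closing the induction.

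The converse (2)$\Rightarrow$(1) is immediate: for homogeneous $x\neq 0$ of degree $d$ the graded right ideal $xA$ equals $eA$ for a homogeneous idempotent $e$ of degree $0$; writing $e=xa$ and taking the degree‑$(-d)$ component $y$ of $a$ gives $xy=e$, and since $x\in xA=eA$ forces $ex=x$ we conclude $x=xyx$. For the unital case, (1)$\Leftrightarrow$(3) is the graded version of ``$A$ regular iff every $A$‑module is flat'': if $A$ is graded regular then by (2) every finitely generated graded one‑sided ideal is a direct summand, so every finitely presented graded module is graded projective (same argument as ungraded), and since every graded module is a direct limit of finitely presented graded modules and direct limits of projectives are flat, every graded module is flat; conversely, if all graded right modules are flat then $A/xA$ is flat and finitely presented for each homogeneous $x$, hence projective, so $xA$ is a direct summand, equals $eA$ for a homogeneous idempotent $e$, and $x=xyx$ as before. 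I would cite \cite[C, I.5]{grrings} for the details of this last equivalence.

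The step I expect to be the main obstacle is the orthogonalisation inside (1)$\Rightarrow$(2): the naive guesses $e+f$ and $e+f-ef$ are not idempotent unless $e$ and $f$ are two‑sidedly orthogonal, so one must first force $ef=0$ by replacing $x_n$ with $(u-e)x_n$, and then use the special form $f=y'z$ (with $y'=(u-e)x_n$) to get the additional relation $fef=0$, after which $e+f-fe$ does the job. Keeping track that all idempotents involved lie in degree $0$, and systematically replacing the identity by a homogeneous local unit, is routine but needs care. By contrast, the equivalence with (3) is essentially bookkeeping with the graded forms of standard homological facts (finitely presented plus flat implies projective, direct limits of flats are flat), available in the references already cited.
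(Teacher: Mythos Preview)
Your proof is correct and is precisely the graded adaptation of Goodearl's arguments \cite[Theorem~1.1, Corollary~1.13]{goodearlbook} that the paper cites and then omits. The orthogonalisation step you flag (forcing $ef=0$ via $y'=(u-e)x_n$ and then $fef=0$ via $f=y'z$, so that $g=e+f-fe$ is idempotent) is exactly the standard trick carried over to the graded setting with a homogeneous local unit replacing $1$.
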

\begin{proof}
The proof is similar to the non-graded case~\cite[Theorem~1.1, Corollary~1.13]{goodearlbook} and it is omitted. 
\end{proof}

We denote by $J^{\gr}(A)$ the graded Jacobson radical of $A$ and by $J(A)$ the usual Jacobson radical. 

\begin{proposition}\label{pkhti12}
Let $A$ be a (not necessarily unital) $\Gamma$-graded von Neumann regular ring. Then

\begin{enumerate}[\upshape(1)]
\item Any graded right (left) ideal of $A$ is idempotent;

\item Any graded ideal is semi-prime;

\item Any finitely generated right (left) graded ideal of $A$ is a projective module. 

\end{enumerate}

Furthermore, if $A$ is a $\mathbb Z$-graded regular ring with a set of homogeneous local units then, 

\begin{enumerate}[\upshape(4)]

\item $J(A)=J^{\gr}(A) =0$. 

\end{enumerate}
\end{proposition}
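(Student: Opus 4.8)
The plan is to deduce all four assertions from a single observation: if $x\in A$ is homogeneous and $y\in A$ is a homogeneous element with $xyx=x$, then $e:=xy$ and $e':=yx$ are homogeneous idempotents (automatically of degree $0$, since a homogeneous idempotent of degree $\delta$ forces $\delta=2\delta$) satisfying $xA=eA$ and $Ax=Ae'$, because $x=ex\in eA$ and $x=xe'\in Ae'$.

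For (1) I would take a graded right ideal $I$ and a nonzero homogeneous $x\in I$; since $I$ is a right ideal, $xy\in I$, hence $x=(xy)x\in I^2$, and as $I$ is spanned by its homogeneous elements this gives $I=I^2$ (the left-ideal case being symmetric). Part (2) then comes for free: for a graded ideal $P$ the quotient $A/P$ is again graded von Neumann regular, so by (1) every graded ideal of $A/P$ equals its own square and hence is not a nonzero nilpotent ideal, so $P$ is (graded-)semiprime. For (3) I would note that a finitely generated graded right ideal is generated by finitely many homogeneous elements, and that a routine orthogonalisation of idempotents --- replace a second generator $x$ by $x-ex$, write $(x-ex)A=fA$ with $f$ a homogeneous idempotent, orthogonalise $f$ against $e$, and combine --- reduces matters to $I=eA$ for a single homogeneous idempotent $e$; then $a\mapsto ea$ is a right $A$-module retraction of $eA\hookrightarrow A$, so $eA$ is a direct summand of $A$ and in particular a projective module.

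The one part carrying genuine content is (4). For the graded radical, $J^{\gr}(A)$ is a graded ideal, so it is enough to show it contains no nonzero homogeneous element: if $0\ne x\in J^{\gr}(A)$ were homogeneous, picking a homogeneous $y$ with $xyx=x$ would put the idempotent $e'=yx$ in the ideal $J^{\gr}(A)$, and $e'\ne 0$ (else $x=xyx=0$); but no Jacobson radical contains a nonzero idempotent, since an idempotent $e$ in the radical is quasi-regular and multiplying a quasi-inverse relation $y+e-ye=0$ on the right by $e$ forces $e=0$ --- a contradiction. Hence $J^{\gr}(A)=0$. For the ordinary radical I would use the two extra hypotheses: since $A$ has local units and the grading group is $\mathbb Z$, Bergman's theorem applies and $J(A)$ is a graded ideal; the argument just given then runs verbatim with $J(A)$ in place of $J^{\gr}(A)$ and yields $J(A)=0$, whence $J(A)=J^{\gr}(A)=0$.

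I expect the main obstacle to be locating the external inputs for (4) at the right level of generality --- in particular the gradedness of $J(A)$ for $\mathbb Z$-graded rings with local units --- since (1)--(3) are formal consequences of graded regularity and the fact that a Jacobson radical has no nonzero idempotent is elementary. If one wishes to keep (3) entirely self-contained, the only step needing a little care is the orthogonalisation reducing finitely many homogeneous generators to a single idempotent.
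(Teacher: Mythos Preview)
Your argument is correct and follows the same route the paper takes: parts (1)--(3) are the graded analogues of Goodearl's Theorem~1.1 and Corollary~1.2, which the paper simply cites and omits, and for (4) both you and the paper invoke Bergman's observation that $J(A)$ is a graded ideal for $\mathbb Z$-gradings and then kill it via the fact that a (graded) Jacobson radical cannot contain a nonzero idempotent. The paper compresses this last step by appealing to Proposition~\ref{pkhti1} rather than writing out the quasi-inverse computation, but the content is identical.
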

\begin{proof}
The proofs of (1)-(3)  are similar to the non-graded case~\cite[Corollary~1.2]{goodearlbook} and they are omitted. The statement (4) follows from Bergman's observation that $J(A)$ is a graded ideal (see~\cite[Lemma~6.2]{aarbitrary}). So by Proposition~\ref{pkhti1}, $J(A)$ contains an idempotent, which then forces $J(A)=0$. 
\end{proof}

In Proposition~\ref{cafejen4may} we will show that the graded von Neumann property is a graded Morita invariant. This will be used in the main Theorem~\ref{sthfin3}. 
For its proof, we need the following lemma which is the graded version of ~\cite[\S1, Lemma~1.6]{goodearlbook}. Its proof is similar  to the non-graded version, and we omit it.  
 
\begin{lemma} \label{monashhhh}
Let $A$ be a $\Gamma$-graded ring and $e_1,\dots, e_n$ be orthogonal homogeneous idempotent in $A$ such that $e_1+\dots+e_n=1$. Then $A$ is graded von Neumann regular if and only if for any $x\in e_i A^h e_j$, there exists $y\in e_j A^h e_i$ such that $xyx=x$.
\end{lemma}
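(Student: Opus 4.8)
I would prove the two implications separately. \emph{Forward direction.} If $A$ is graded von Neumann regular and $x\in e_iA^he_j$, choose a homogeneous $z\in A$ with $xzx=x$ and put $y:=e_jze_i$. Then $y\in e_jA^he_i$, it is homogeneous because $e_i,e_j$ are homogeneous of degree zero, and $xyx=xe_jze_ix=xzx=x$ since $x=e_ixe_j$.

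\emph{Two auxiliary facts.} For the converse I would first record the elementary identity: for $a,b\in A$, writing $c:=a-aba$, one has for every $c'\in A$
\[
a\bigl(b+(1-ba)\,c'\,(1-ab)\bigr)a \;=\; aba+cc'c ,
\]
because $a(1-ba)=(1-ab)a=c$; hence, once $cc'c=c$, the bracketed element is a quasi-inverse of $a$, and if $a\in A_\ga$ and $b,c'\in A_{-\ga}$ then it lies in $A_{-\ga}$. Second, a ``gluing'' lemma: let $g,h_1,\dots,h_m$ be homogeneous idempotents with $h_kh_l=0$ for $k\neq l$, suppose $gAg$ is graded von Neumann regular, and let $x\in gA^h$ be homogeneous with $x=\sum_k xh_k$; if for each $k$ there is $y_k\in h_kA^hg$ with $(xh_k)y_k(xh_k)=xh_k$, then there is a homogeneous $y\in\bigl(\sum_k h_k\bigr)A^hg$ with $xyx=x$ and $xy\in gAg$. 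To prove this I would take the $y_k$ homogeneous, put $x_k:=xh_k$ and $p_k:=x_ky_k$, observe that each $p_k$ is a homogeneous idempotent of $gAg$ with $x_k=p_kx_k$ and $x_kA=p_kA$, and apply Proposition~\ref{pkhti1} to $gAg$, which is graded von Neumann regular and unital (with homogeneous identity $g$): the finitely generated graded right ideal $\sum_k p_k(gAg)$ is generated by a homogeneous idempotent $\eta$. Then $\sum_k x_kA=\sum_k p_kA=\eta A$, so $\eta x=x$ and $\eta=\sum_k x_kc_k$ for suitable homogeneous $c_k\in h_kA^hg$; putting $y:=\sum_k c_k$ and using $x_kc_l\in gAh_k\cdot h_lAg=0$ for $k\neq l$ gives $xy=\sum_k x_kc_k=\eta$, whence $xyx=\eta x=x$. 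The left-handed version (roles of left and right interchanged, using left ideals) holds by the same argument.

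\emph{The induction.} I would show $A$ is graded von Neumann regular by induction on $n$; the case $n=1$ is the hypothesis itself. Let $n\geq2$, fix a homogeneous $a\in A_\ga$, and put $f:=e_2+\dots+e_n$. Applying the gluing lemma to $x:=e_1a$ with $g=e_1$ and $\{h_k\}=\{e_1,\dots,e_n\}$—legitimate since $e_1A^he_1$ is graded regular (the case $i=j=1$ of the hypothesis) and each block $e_1ae_k$ has a quasi-inverse in $e_kA^he_1$—produces a homogeneous $b_1\in A^he_1$ with $(e_1a)b_1(e_1a)=e_1a$ and $\eta:=(e_1a)b_1\in e_1Ae_1$; writing $a=e_1a+fa$ and using $\eta f=0$ we get $e_1(ab_1a)=(e_1a)b_1a=(e_1a)b_1(e_1a)+\eta(fa)=e_1a$, so $e_1c_1=0$, i.e. $c_1:=a-ab_1a$ lies in $fA^h$. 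Applying the left-handed gluing lemma to $c_1e_1$ with $g=e_1$ and $\{h_k\}=\{e_2,\dots,e_n\}$ produces a homogeneous $b_2\in e_1A^hf$ with $(c_1e_1)b_2(c_1e_1)=c_1e_1$; since $b_2=e_1b_2$ this gives $(c_1b_2c_1)e_1=(c_1e_1)b_2(c_1e_1)=c_1e_1$, while $e_1c_1=0$ gives $e_1(c_1b_2c_1)=0$, so $c_2:=c_1-c_1b_2c_1$ lies in $fA^hf$. Now $fAf$ carries the $n-1$ orthogonal homogeneous idempotents $e_2,\dots,e_n$ and satisfies the hypothesis of the lemma (since $e_iA^he_j=e_i(fAf)^he_j$ for $i,j\geq2$), so by induction it is graded von Neumann regular and $c_2$ has a homogeneous quasi-inverse $c_2'\in fA^hf$. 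Two applications of the displayed identity—first lifting $c_2'$ to a quasi-inverse of $c_1$, then lifting the result to a quasi-inverse of $a$—yield a homogeneous $\tilde b$ with $a\tilde ba=a$. This completes the induction, and Lemma~\ref{monashhhh} follows, its hypothesis being exactly the hypothesis of this induction applied to $e_1,\dots,e_n$.

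\emph{Main obstacle.} The only non-routine ingredient is the gluing lemma, and within it the step that replaces the (in general non-orthogonal) idempotents $p_k=x_ky_k$ by a single idempotent generator $\eta$ of the right ideal they span in the corner $gAg$; this is precisely where graded von Neumann regularity of the corners re-enters, through Proposition~\ref{pkhti1}. The rest is bookkeeping with orthogonality relations, and the grading takes care of itself because $e_1,\dots,e_n$ are homogeneous of degree zero, so that $e_iae_j$ has the same degree as $a$ and every quasi-inverse produced is forced to lie in $A_{-\ga}$.
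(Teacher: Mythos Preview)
Your argument is correct. The paper itself omits the proof, stating only that it is ``similar to the non-graded version'' in Goodearl's book and leaving it at that; so there is no proof in the paper to compare against line by line.

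That said, your route is somewhat more elaborate than the standard Goodearl argument. Goodearl's proof of the ungraded Lemma~1.6 proceeds by induction on $n$, reducing to $n=2$, and in that case handles the four Peirce components of a given element by a short sequence of direct reductions using only the identity you record, namely that for $c=a-aba$ one has $a\bigl(b+(1-ba)c'(1-ab)\bigr)a=aba+cc'c$. Your gluing lemma packages several of these reductions into a single step, at the cost of invoking Proposition~\ref{pkhti1} (finitely generated graded one-sided ideals in a graded regular ring are generated by a homogeneous idempotent) to merge the idempotents $p_k=x_ky_k$ into a single $\eta$. This is legitimate---Proposition~\ref{pkhti1} is established earlier and does not depend on the present lemma---but it is an extra ingredient the classical proof does not need. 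What your formulation buys is a cleaner separation of concerns: the gluing lemma isolates exactly where regularity of the corner $gAg$ enters, and the remaining bookkeeping is purely combinatorial with the orthogonality relations. The classical proof is shorter but interleaves these two aspects.

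One minor point worth making explicit in your write-up: when you extract the homogeneous $c_k$ from $\eta=\sum_k p_kr_k$, you should note that since $\eta$ and each $p_k$ are homogeneous of degree $0$, the degree-$0$ components of the $r_k$ already suffice, which is what forces $y=\sum_k y_kr_k$ to be homogeneous of degree $-\gamma$. You allude to this in the final paragraph but it deserves a sentence inside the gluing-lemma proof itself.
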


By $\Gr A$ (or $\Gr[\Gamma] A$ to emphasis the graded group of $A$), we denote a category consists of $\Gamma$-graded right $A$-modules as objects and  graded homomorphisms as the morphisms. Recall that for $\alpha \in \Gamma$,  the $\alpha$-shift functor $\mathcal T_\alpha: \Gr A\rightarrow \Gr A$, $M \mapsto M(\alpha)$  is an isomorphism with the property
$\mathcal T_\alpha \mathcal T_\beta=\mathcal T_{\alpha + \beta}$, $\alpha,\beta\in \Gamma$. For two $\Gamma$-graded ring $A$ and $B$, 
a functor $\phi:\Gr A \rightarrow \Gr B$ is called a \emph{graded functor} \index{graded functor} if $\phi \mathcal T_{\alpha} = \mathcal T_{\alpha} \phi$. 
A graded functor $\phi:\Gr A \rightarrow \Gr B$ is called a \emph{graded equivalence} \index{graded equivalence} if there is a graded functor $\psi:\Gr B \rightarrow \Gr A$ such that $\psi \phi \cong 1_{\Gr A}$ and $\phi \psi \cong 1_{\Gr B}$.  If there is a graded equivalence between $\Gr A$ and $\Gr B$, we say $A$ and $B$ are \emph{graded equivalent} or \emph{graded Morita equivalence} and we write \index{graded Morita equivalent}
$\Gr A \approx_{\gr} \Gr B$, or $\Gr[\Gamma] A \approx_{\gr} \Gr[\Gamma] B$ to emphasis the categories are $\Gamma$-graded. We refer the reader to~\cite[\S2]{hazgrmon} for a comprehensive study of the graded Morita theory. In particular, to prove the next proposition, we need the following theorem (see ~\cite[Theorem~2.3.6]{hazgrmon}).

\begin{theorem}\label{pardosuggi}
Let $A$ and $B$ be $\Gamma$-graded rings. Then $A$ is graded Morita equivalent to $B$ if and only if \[B\cong_{\gr} e \M_n(A)(\overline \delta) e\] for a full homogeneous idempotent $e \in \M_n(A)(\overline \delta)$, where $\overline \delta=(\delta_1,\dots,\delta_n)$, $\delta_i \in \Gamma$.  
\end{theorem}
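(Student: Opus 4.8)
This is the $\Gamma$-graded counterpart of the classical Morita theorem, so the plan is to run the usual argument while carrying the shifts along. The central object will be a \emph{graded progenerator} over $A$, i.e.\ a graded right $A$-module $P$ that is finitely generated, graded projective, and a generator of $\Gr A$; the theorem follows once we know (i) that such $P$ are exactly the graded direct summands $P=eF$ of graded free modules $F=\bigoplus_{i=1}^{n}A(\delta_i)$ cut out by a \emph{full} homogeneous idempotent $e\in\END_A(F)\cong_{\gr}\M_n(A)(\overline\delta)$, and (ii) that a graded equivalence $\Gr A\approx_{\gr}\Gr B$ forces $B\cong_{\gr}\END_A(P)$ for the corresponding $P$. (One could instead go through a graded Eilenberg--Watts theorem, representing the equivalence by tensoring with a graded bimodule; the route below avoids that.)

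For ($\Leftarrow$), given $B\cong_{\gr}eRe$ with $R=\M_n(A)(\overline\delta)$ and $e$ a full homogeneous idempotent, it suffices to produce a graded equivalence $\Gr B\approx_{\gr}\Gr A$, and I would build it by composing two standard ones. First, $\Gr R\approx_{\gr}\Gr A$: tensoring with the graded bimodules of column and row $n$-vectors over $A$ (carrying the shifts $\delta_1,\dots,\delta_n$) gives mutually inverse graded functors, the commutation with the shift functors $\mathcal T_\alpha$ being automatic since $-\otimes-$ commutes with shifts. Second, for a full homogeneous idempotent $e$ in a unital graded ring $R$, the functors $-\otimes_{eRe}eR$ and $-\otimes_{R}Re$ are mutually inverse graded equivalences between $\Gr eRe$ and $\Gr R$, the point being that $Re\otimes_{eRe}eR\to R$ and $eR\otimes_{R}Re\to eRe$ are graded isomorphisms exactly because $ReR=R$. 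Composing yields $\Gr B\approx_{\gr}\Gr eRe\approx_{\gr}\Gr R\approx_{\gr}\Gr A$.

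For ($\Rightarrow$), I would take mutually inverse graded functors $\phi\colon\Gr A\to\Gr B$, $\psi\colon\Gr B\to\Gr A$ and set $P=\psi(B)\in\Gr A$. Being an equivalence, $\psi$ sends the finitely generated projective generator $B_B$ to a finitely generated, graded projective generator $P$ of $\Gr A$ (all three properties are categorical in $\Gr A$). Since $\psi\mathcal T_\gamma=\mathcal T_\gamma\psi$, we get $\psi(B(\gamma))=P(\gamma)$, and $\psi$ induces bijections $\Hom_{\Gr B}(B,B(\gamma))\xrightarrow{\,\sim\,}\Hom_{\Gr A}(P,P(\gamma))$ for each $\gamma\in\Gamma$; as $\Hom_{\Gr B}(B,B(\gamma))\cong B_\gamma$ and $\psi$ respects composition, summing over $\gamma$ produces a graded ring isomorphism $B\cong_{\gr}\END_A(P)$. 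Choosing homogeneous generators $x_1,\dots,x_n$ of $P$, of degrees $-\delta_1,\dots,-\delta_n$, I get a graded epimorphism $F=\bigoplus_{i=1}^{n}A(\delta_i)\twoheadrightarrow P$, which splits since $P$ is graded projective, so $P\cong_{\gr}eF$ for a homogeneous idempotent $e\in\END_A(F)\cong_{\gr}\M_n(A)(\overline\delta)$, whence \[ B\cong_{\gr}\END_A(P)\cong_{\gr}\END_A(eF)\cong_{\gr}e\,\END_A(F)\,e\cong_{\gr}e\,\M_n(A)(\overline\delta)\,e. \] Finally, $eF$ is a generator of $\Gr A$ iff its graded trace ideal equals $A$, which translates precisely into $\M_n(A)(\overline\delta)\,e\,\M_n(A)(\overline\delta)=\M_n(A)(\overline\delta)$, i.e.\ into $e$ being full.

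The step I expect to be most delicate is not conceptual but the grading bookkeeping: one must check carefully that a finitely generated graded projective module is a graded direct summand of a graded free module of the specific shape $\bigoplus_i A(\delta_i)$ — this is where the tuple $\overline\delta$ originates and where the freedom in choosing the $\delta_i$ lives — and one must pin down the graded isomorphism $\END_A(\bigoplus_i A(\delta_i))\cong_{\gr}\M_n(A)(\overline\delta)$ with the correct shift convention on the matrix entries. It is also essential that $\phi$ and $\psi$ be \emph{graded} functors, commuting with every $\mathcal T_\alpha$: without this the argument recovers only $B_0\cong\End_{\Gr A}(P)$ and not the full graded ring $B$, and an arbitrary equivalence $\Gr A\simeq\Gr B$ need not arise from a single bimodule or idempotent at all.
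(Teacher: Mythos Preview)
The paper does not actually prove this theorem; it merely quotes it from the author's monograph \cite[Theorem~2.3.6]{hazgrmon} as a tool for the subsequent Proposition~\ref{cafejen4may}. So there is no in-text argument to compare against.

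That said, your proposal is the standard route to the graded Morita theorem and is correct in outline. The backward direction via the two graded equivalences $\Gr A\approx_{\gr}\Gr \M_n(A)(\overline\delta)$ and $\Gr R\approx_{\gr}\Gr eRe$ for a full homogeneous idempotent $e$ is exactly how the cited reference argues it, and your forward direction --- pulling $B_B$ back to a graded progenerator $P=\psi(B)$, reading off $B\cong_{\gr}\END_A(P)$ from the shift-compatibility of $\psi$, and then presenting $P$ as $eF$ with $F=\bigoplus_i A(\delta_i)$ --- is likewise the expected argument. Your cautionary remarks at the end are well placed: the identification $\END_A\big(\bigoplus_i A(\delta_i)\big)\cong_{\gr}\M_n(A)(\overline\delta)$ with the correct shift on each matrix entry, and the equivalence between ``$P$ generates $\Gr A$'' and ``$e$ is full in $\M_n(A)(\overline\delta)$'', are precisely the graded-specific checks that distinguish this from the ungraded theorem, and both hold as you describe.
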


\begin{proposition}\label{cafejen4may}
Let $A$ and $B$ be $\Gamma$-graded rings which are graded Morita equivalent. Then $A$ is a graded von Neumann ring if and only if $B$ is a graded von Neumann ring.
\end{proposition}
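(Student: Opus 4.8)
The plan is to use the structure theorem for graded Morita equivalence (Theorem~\ref{pardosuggi}) to reduce the statement to an explicit matrix-ring computation, and then to check graded regularity passes through the two operations involved: forming matrix rings with a shift, and passing to a full corner.

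\medskip

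\emph{Step 1: Reduce to matrix rings and corners.} By Theorem~\ref{pardosuggi}, if $A$ and $B$ are graded Morita equivalent then $B \cong_{\gr} e\M_n(A)(\overline\delta)e$ for some full homogeneous idempotent $e$. Since graded von Neumann regularity is obviously preserved under graded isomorphism, it suffices to prove two things: (a) if $A$ is graded regular, then so is $\M_n(A)(\overline\delta)$ for any shift $\overline\delta = (\delta_1,\dots,\delta_n)$; and (b) if $R$ is graded regular and $e \in R$ is a homogeneous idempotent, then the corner $eRe$ is graded regular. Combining (a) and (b) gives the ``only if'' direction; the ``if'' direction then follows by symmetry, because graded Morita equivalence is a symmetric relation (if $B \cong_{\gr} e\M_n(A)(\overline\delta)e$ with $e$ full, one likewise recovers $A$ as a full corner of a shifted matrix ring over $B$).

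\medskip

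\emph{Step 2: Corners.} For (b), let $x \in eRe$ be homogeneous. Viewing $x$ as a homogeneous element of $R$, graded regularity of $R$ gives a homogeneous $y \in R$ with $xyx = x$. Replacing $y$ by $eye$, which is still homogeneous and lies in $eRe$, one checks $x(eye)x = (xe)y(ex) = xyx = x$ since $x = exe$. Hence $eRe$ is graded regular. (This is the graded analogue of the classical fact that a corner of a regular ring is regular, and is essentially immediate.)

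\medskip

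\emph{Step 3: Matrix rings with a shift.} For (a), note that $\M_n(A)(\overline\delta)$ is a unital $\Gamma$-graded ring whose identity decomposes as $1 = e_{11} + \dots + e_{nn}$ into orthogonal homogeneous idempotents, where $e_{ii}$ is the matrix unit; with the shift $\overline\delta$, the degree-$\ga$ component of $\M_n(A)(\overline\delta)$ has $(i,j)$ entry in $A_{\ga + \delta_i - \delta_j}$. By Lemma~\ref{monashhhh}, it is enough to show that for every homogeneous $x \in e_{ii}\M_n(A)(\overline\delta)^h e_{jj}$ there is a homogeneous $y \in e_{jj}\M_n(A)(\overline\delta)^h e_{ii}$ with $xyx = x$. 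But such an $x$ is a matrix with a single nonzero entry $a$ in position $(i,j)$, and $a$ is homogeneous in $A$ (of degree $\ga + \delta_i - \delta_j$ if $x$ has degree $\ga$). Pick a homogeneous $b \in A$ with $aba = a$, which exists since $A$ is graded regular, and let $y$ be the matrix with $b$ in position $(j,i)$ and zeros elsewhere. Then $y$ is homogeneous of the appropriate degree, lies in $e_{jj}\M_n(A)(\overline\delta)^h e_{ii}$, and $xyx$ is the matrix with $aba = a$ in position $(i,j)$, i.e.\ $xyx = x$. Thus $\M_n(A)(\overline\delta)$ is graded regular.

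\medskip

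\emph{Main obstacle.} There is no deep obstacle here; the content is entirely bookkeeping, and the one point that needs genuine (if light) care is the degree accounting in Step 3 — verifying that the entrywise reduction is compatible with the shifted grading on $\M_n(A)(\overline\delta)$, so that ``homogeneous in the matrix ring'' really does correspond to ``single homogeneous entry in $A$'', and that the $y$ produced lands in the correct homogeneous component. Once Lemma~\ref{monashhhh} is invoked to localize the regularity condition to the $e_{ii}$-$e_{jj}$ blocks, everything reduces to the graded regularity of $A$ applied to a single entry, and the argument closes.
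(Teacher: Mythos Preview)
Your proof is correct and follows essentially the same route as the paper: invoke Theorem~\ref{pardosuggi}, use Lemma~\ref{monashhhh} with the diagonal matrix units to reduce graded regularity of $\M_n(A)(\overline\delta)$ to graded regularity of $A$ entrywise, and then pass to the corner $eRe$ (you make this last step explicit, while the paper leaves it implicit). One cosmetic point: the paper's shift convention has the $(i,j)$ entry of $\M_n(A)(\overline\delta)_\lambda$ lying in $A_{\lambda+\delta_j-\delta_i}$ rather than $A_{\lambda+\delta_i-\delta_j}$, but this does not affect your argument.
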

\begin{proof}
Let $A$ be a graded Morita equivalent to $B$. Then by Theorem~\ref{pardosuggi}, 
\begin{equation}\label{waitingforj}
B\cong_{\gr} e \M_n(A)(\overline \delta) e,
\end{equation}
 for a full homogeneous idempotent $e \in \M_n(A)(\overline \delta)$, where $\overline \delta=(\delta_1,\dots,\delta_n)$, $\delta_i \in \Gamma$. Suppose $A$ is a graded regular ring. We will show that $B$ is graded regular. Since $e$ is a homogeneous idempotent, if we show that $\M_n(A)(\overline \delta)$ is graded regular then (\ref{waitingforj}) implies that $B$ is graded regular.  Let $e_i \in  \M_n(A)(\overline \delta)$ be a matrix with $1$ in $(i,i)$-component and zero elsewhere. 
 Clearly $e_i$, $1\leq i \leq n$, are orthogonal homogeneous idempotent (of degree zero) with $\sum_{i}e_i=1$. Since
 \begin{equation*}\label{mmkkhh}
\M_n(A)(\overline \delta)_{\la} =
\begin{pmatrix}
A_{ \la+\de_1 - \de_1} & A_{\la+\de_2  - \de_1} & \cdots &
A_{\la +\de_n - \de_1} \\
A_{\la + \de_1 - \de_2} & A_{\la + \de_2 - \de_2} & \cdots &
A_{\la+\de_n  - \de_2} \\
\vdots  & \vdots  & \ddots & \vdots  \\
A_{\la + \de_1 - \de_n} & A_{ \la + \de_2 - \de_n} & \cdots &
A_{\la + \de_n - \de_n}
\end{pmatrix},
\end{equation*} 
(see~\cite[\S1.1.3]{hazgrmon}), we have 
 \begin{equation}\label{didoyes}
 e_i \M_n(A)(\overline \delta)_\gamma e_j=e_{ij}(A_{\gamma+\delta_j-\delta_i}).
 \end{equation}
  Suppose  $x  \in e_i \M_n(A)(\overline \delta)_\gamma e_j$. By (\ref{didoyes}),  $x= e_{ij}(a)$, where $a \in A_{\gamma+\delta_j-\delta_i}$. 
 Since $A$ is graded regular, there is $b\in A_{-\gamma+\delta_i-\delta_j}$ such that $aba=a$.
 Thus $y:=e_{ji}(b) \in e_j \M_n(A)(\overline \delta)_{-\gamma} e_i$ and clearly $xyx=x$. Therefore by Lemma~\ref{monashhhh}, 
 $\M_n(A)(\overline \delta)$ is graded regular. Consequently, $B\cong_{\gr} e \M_n(A)(\overline \delta) e$ is graded regular. 
 
A similar argument shows that if $B$ is graded regular then $A$ is graded regular. This completes the proof. 
\end{proof}

\begin{remark}
One can also prove Proposition~\ref{cafejen4may} by observing that the graded Morita equivalent preserves the graded flatness along by using  Proposition~\ref{pkhti1}. 
\end{remark} 

\subsection{Leavitt path algebras}\label{hngt}

A {\it directed graph} $E=(E^0,E^1,r,s)$ consists of two sets $E^0$, $E^1$ and maps $r,s:E^1\rightarrow E^0$. The elements of $E^0$ are called {\it vertices} and the elements of $E^1$ {\it edges}. If $s^{-1}(v)$ is a finite set for every $v \in E^0$, then the graph is called {\it row-finite}.  In this setting, if the number of vertices, i.e.,  $|E^0|$,  is finite, then the number of edges, i.e.,  $|E^1|$, is finite as well and we call $E$ a {\it finite} graph. 

A vertex $v$ for which $s^{-1}(v)$ is empty is called a {\it sink}, while a vertex $w$ for which $r^{-1}(w)$ is empty is called a {\it source}. A vertex $v \in E^0$ such that $|s^{-1}(v)| = \infty$, is called an {\it infinite emitter}. If $v$ is either a sink or an infinite emitter, then it is called a {\it singular vertex}. If $v$ is not a singular vertex, it is called a {\it regular vertex}. In this note, we consider arbitrary graphs, i.e,   we don't assume the cardinality of vertices and edges are countable. We call a graph {\it countable} if the cardinality of vertices and edges are indeed countable. 

  An edge with the same source and range is called a {\it loop}. A path $\mu$ in a graph $E$ is a sequence of edges $\mu=\mu_1\dots\mu_k$, such that $r(\mu_i)=s(\mu_{i+1}), 1\leq i \leq k-1$. In this case, $s(\mu):=s(\mu_1)$ is the {\it source} of $\mu$, $r(\mu):=r(\mu_k)$ is the {\it range} of $\mu$, and $k$ is the {\it length} of $\mu$ which is  denoted by $|\mu|$. We consider a vertex $v\in E^0$ as a {\it trivial} path of length zero with $s(v)=r(v)=v$. By $E^n$, $n \in \mathbb N$, we denote the set of paths of length $n$.
If $\mu$ is a nontrivial path in $E$, and if $v=s(\mu)=r(\mu)$, then $\mu$ is called a {\it closed path based at} $v$. If $\mu=\mu_1\dots\mu_k$ is a closed path based at $v=s(\mu)$ and $s(\mu_i) \not = s(\mu_j)$ for every $i \not = j$, then $\mu$ is called a {\it cycle}.  

\begin{deff}\label{llkas}{\sc Leavitt path algebra of an arbitrary graph.} \label{LPA} \\
For a graph $E$ and a field $K$, the {\it Leavitt path algebra of $E$}, denoted by $\LL_K(E)$, is the algebra generated by the sets $\{v \mid v \in E^0\}$, $\{ \alpha \mid \alpha \in E^1 \}$ and $\{ \alpha^* \mid \alpha \in E^1 \}$ with the coefficients in $K$, subject to the relations 

\begin{enumerate}
\item $v_iv_j=\delta_{ij}v_i \textrm{ for every } v_i,v_j \in E^0$.

\smallskip

\item $s(\alpha)\alpha=\alpha r(\alpha)=\alpha \textrm{ and }
r(\alpha)\alpha^*=\alpha^*s(\alpha)=\alpha^*  \textrm{ for all } \alpha \in E^1$.

\smallskip
\item $\alpha^* \alpha'=\delta_{\alpha \alpha'}r(\alpha)$, for all $\alpha, \alpha' \in E^1$.

\smallskip
\item $\sum_{\{\alpha \in E^1, s( \alpha)=v\}} \alpha \alpha^*=v$, for every regular vertex $v\in E^0$.

\end{enumerate}
\end{deff}
Here the field $K$ commutes with the generators $\{v,\alpha, \alpha^* \mid v \in E^0,\alpha \in E^1\}$. Throughout this note the coefficient ring is a fixed field $K$ and we simply write $\LL(E)$ instead of $\LL_K(E)$. The elements $\alpha^*$ for $\alpha \in E^1$ are called {\it ghost edges}. One can show that $\LL(E)$ is a ring with identity if and only if the graph $E$ is finite (otherwise, $\LL(E)$ is a ring with local units). 

\subsection{Grading of Leavitt path algebras}\label{hngtrere}

For an arbitrary group $\Gamma$, one can equip $\LL(E)$ with a $\Gamma$-graded structure. This will be needed in the proof of the main Theorem~\ref{sthfin3}.  Let $\Gamma$ be an arbitrary group with the identity element $e$. Let $w:E^1\rightarrow \Gamma$ be a {\it weight} map and further define $w(\alpha^*)=w(\alpha)^{-1}$, for $\alpha \in E^1$ and $w(v)=e$ for $v\in E^0$.  The free $K$-algebra generated by the vertices, edges and ghost edges is a $\Gamma$-graded $K$-algebra. Furthermore, the Leavitt path algebra is the quotient of this algebra by relations in  Definition~\ref{LPA} which are all homogeneous. Thus $\LL_K(E)$ is a $\Gamma$-graded $K$-algebra. 

The {\it canonical} grading given to a Leavitt path algebra is a $\mathbb Z$-grading by setting $\deg(v)=0$, for $v\in E^0$, $\deg(\alpha)=1$ and $\deg(\alpha^*)=-1$ for $\alpha \in E^1$. 
If $\mu=\mu_1\dots\mu_k$, where $\mu_i \in E^1$, is an element of $\LL(E)$, then we denote by $\mu^*$ the element $\mu_k ^*\dots \mu_1^* \in \LL(E)$. Further we define $v^*=v$ for any $v\in E^0$. Since $\alpha^* \alpha'=\delta_{\alpha \alpha'}r(\alpha)$, for all $\alpha, \alpha' \in E^1$, any word in the generators $\{v, \alpha, \alpha^* \mid v\in E^0, \alpha \in E^1   \}$ in $\LL(E)$ can be written as $\mu \gamma ^*$ where $\mu$ and $\gamma$ are paths in $E$ (vertices are considered paths of length zero).  The elements of the form $\mu\gamma^*$ are called {\it monomials}. 

Taking the grading into account, one can write $\LL(E) =\textstyle{\bigoplus_{k \in \mathbb Z}} \LL(E)_k$ where,
\[\LL(E)_k=  \Big \{ \sum_i r_i \alpha_i \beta_i^*\mid \alpha_i,\beta_i \textrm{ are paths}, r_i \in K, \textrm{ and } |\alpha_i|-|\beta_i|=k \textrm{ for all } i \Big\}.\]
\subsection{Corner skew Laurent polynomial rings}\label{cornerskew}

The Leavitt path algebras of finite graphs with no sources are examples of corner skew Laurent polynomial rings~\cite{arafrac}. We will use this interpretation to prove that the Leavitt path algebras of such finite graphs are graded regular. We recall the construction of these rings here. 

Let $R$ be a ring with identity and $p$ an idempotent of $R$. Let $\phi:R\rightarrow pRp$ be a {\it corner} isomorphism, i.e, a ring isomorphism with $\phi(1)=p$. A \emph{corner skew Laurent polynomial ring}  \index{corner skew Laurent polynomial ring} with coefficients in $R$, denoted by $R[t_{+},t_{-},\phi]$, is a unital ring which is constructed as follows:  The elements of $R[t_{+},t_{-},\phi]$ are formal expressions
\[t^j_{-}r_{-j} +t^{j-1}_{-}r_{-j+1}+\dots+t_{-}r_{-1}+r_0 +r_1t_{+}+\dots +r_it^i_{+},\]
where $r_{-n} \in p_n R$ and $r_n \in R p_n$, for all $n\geq 0$, where $p_0 =1$ and $p_n =\phi^n(p_0)$. The addition is component-wise, and multiplication is determined by the distribution law and the following rules:
\begin{equation}\label{oiy53} 
t_{-}t_{+} =1, \qquad t_{+}t_{-} =p, \qquad rt_{-} =t_{-}\phi(r),\qquad  t_{+}r=\phi(r)t_{+}.
\end{equation}

The corner skew Laurent polynomial rings is a special case of a so called fractional skew monoid rings constructed in~\cite{arafrac}. Assigning $-1$ to $t_{-}$ and $1$ to $t_{+}$ makes $A:=R[t_{+},t_{-},\phi]$ a $\mathbb Z$-graded ring with $A=\bigoplus_{i\in \mathbb Z}A_i$, where (see~\cite[Proposition~1.6]{arafrac})
\begin{align*}
A_i& = Rp_it^i_{+}, \text{ for  } i>0,\\
A_i&=t^i_{-}p_{-i}R, \text{ for } i<0,\\
A_0& =R. 
\end{align*}
Clearly, when $p=1$ and $\phi$ is the identity map, then $R[t_{+},t_{-},\phi]$ reduces to the familiar ring $R[t,t^{-1}]$. 


\begin{proposition}\label{lanhc8}
Let $R$ be a ring with identity and $A=R[t_{+},t_{-},\phi]$ be a corner skew Laurent polynomial ring. Then $A$ is a graded von Neumann regular ring if and only if $R$ is a von Neumann regular ring. 
\end{proposition}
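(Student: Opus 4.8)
The plan is to verify the graded von Neumann regularity condition one homogeneous component at a time, using the explicit description of $A_i$ recalled above together with the relations (\ref{oiy53}). The computational backbone is the pair of identities $t_-^i t_+^i = 1$ and $t_+^i t_-^i = p_i$, valid for every $i \ge 0$: the first follows from $t_- t_+ = 1$ by an easy induction, and the second by moving the idempotent $p_1 = t_+ t_-$ to the left through $t_+^{i-1}$ via $t_+ r = \phi(r) t_+$ (so that $t_+^{i-1} p_1 = p_i t_+^{i-1}$) and using that $\phi^n$ is a ring homomorphism with $\phi^n(1) = p_n$, whence $p_m p_n = p_m$ for $m \ge n$.

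For the implication ``$R$ von Neumann regular $\Rightarrow$ $A$ graded von Neumann regular'', I would take a homogeneous $x \in A_i$ and split into three cases. If $i = 0$, then $x \in R = A_0$ and a quasi-inverse of $x$ inside $R$ serves. If $i > 0$, write $x = a t_+^i$ with $a \in R p_i$ (so $a p_i = a$), pick $y' \in R$ with $a y' a = a$, and set $y := t_-^i p_i y'$; then $y \in t_-^i p_i R = A_{-i}$, and using $t_+^i t_-^i = p_i$ one gets $xy = a y'$, hence $xyx = (a y' a) t_+^i = x$. The case $i < 0$, say $i = -n$ with $n > 0$, is the mirror image: write $x = t_-^n a$ with $a \in p_n R$, pick $y' \in R$ with $a y' a = a$, and set $y := y' p_n t_+^n \in R p_n t_+^n = A_n$; then $yx = y' a$ and $xyx = t_-^n(a y' a) = x$. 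In each case $y$ is homogeneous, so $A$ is graded regular.

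For the converse, suppose $A$ is graded regular and let $x \in R = A_0$. Choose $y \in A$ with $xyx = x$ and write $y = \sum_k y_k$ with $y_k \in A_k$; since $x$ is homogeneous of degree $0$, the degree-$0$ component of $xyx$ equals $x y_0 x$, so comparing with $xyx = x$ forces $x y_0 x = x$ with $y_0 \in A_0 = R$. Hence $R$ is von Neumann regular.

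I expect the only delicate point to be the bookkeeping in the first implication: one has to multiply the quasi-inverse $y'$ of $a$ by the appropriate $p_i$ (or $p_n$) on the correct side so that $y$ genuinely lands in $A_{-i}$ (resp.\ $A_i$), and one must push the relations (\ref{oiy53}) and the identities for $t_\pm^i$ through the products carefully. Conceptually there is nothing to worry about: a corner skew Laurent ring unwinds completely under (\ref{oiy53}), and regularity of $R$ is exactly the information needed in degree zero.
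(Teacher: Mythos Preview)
Your argument is correct and follows essentially the same route as the paper: for $i>0$ you write $x=a t_+^i$ with $a\in Rp_i$, choose a quasi-inverse $y'$ of $a$ in $R$, set $y=t_-^i p_i y'$, and use $t_+^i t_-^i=p_i$ to conclude $xyx=x$; the paper does exactly this (with $a=rp_i$ and $y'=s$). You are simply more explicit than the paper in handling the $i=0$ and $i<0$ cases and in extracting the degree-$0$ component for the converse, but the strategy is identical.
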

\begin{proof}
If a graded ring is graded von Neumann regular, then it is easy to see that its zero component ring is von Neumann regular. This proves one direction of the theorem. For the converse, suppose $R$ is regular.  Let $x \in A_i$, where $i>0$. So $x=rp_it_{+}^i$, for some $r\in R$, where $p_i=\phi^i(1)$.  By relations (\ref{oiy53}) and induction, we have $t_{+}^i t_{-}^i=\phi^i(p_0)=p_i$. Since $R$ is regular, there is an $s\in R$ such that $rp_i s rp_i =rp_i$. Then choosing 
$y=t^i_{-}p_is$, we have 
\[xyx=(rp_it_{+}^i)(t^i_{-}p_is)(rp_it_{+}^i)=(rp_it_{+}^i t^i_{-}p_is)(rp_it_{+}^i)=  rp_i p_i p_i s rp_it_{+}^i= rp_i t_{+}^i=x.\]
A similar argument shows that for $x \in A_i$, where $i<0$, there is a $y$ such that $xyx=x$. This shows that $A$ is a graded von Neumann regular ring. 
\end{proof}

\subsection{} \label{calgaryplaza}
We can realise the Leavitt path algebras of finite graphs with no sources in terms of corner skew Laurent polynomial rings~\cite{arafrac}. 
Let $E$ be a finite graph with no sources and let $E^0=\{v_1,\dots,v_n\}$ be the set of all vertices of $E$. For each $1\leq i \leq n$, we choose an edge $e_i$ such that $r(e_i)=v_i$ and consider $t_{+}=e_1+\dots+e_n \in \LL(E)_1$. Then $t_{-}= e^*_1+\dots+e^*_n$ is its left inverse.  
Thus by~\cite[Lemma~2.4]{arafrac}, \[\LL(E)=\LL(E)_0[t_{+},t_{-},\phi],\] where 
\begin{align*}
\phi:\LL(E)_0&\longrightarrow t_{+}t_{-}\LL(E)_0 t_{+}t_{-},\\
a &\longmapsto t_{+}at_{-}.
\end{align*} 

\subsection{Desingularization}\label{hngtrere66}

We briefly recall the desingularization of a countable graph from~\cite{aarbitrary,drinentom}. This will be used in the proof of the main Theorem~\ref{sthfin3}, to extend the graded regularity from row-finite graphs to countable graphs. 

If $E$ is a directed countable graph, then a {\it desingularization} of $E$ is a graph $F$ formed by adding a tail to every sink and every infinite emitter of $E$ in the following manner: If $v_0$ is a sink in $E$, then by adding a tail at $v_0$ we mean attaching a graph of the form
\[\xymatrix{
v_0 \ar[r] & v_1 \ar[r] & v_2 \ar[r]  & v_3 \ar@[.>][r] & \cdots 
}
\]
to $E$ at $v_0$. If $v_0$ is an infinite emitter in $E$, then by adding a tail at $v_0$ we mean performing the following process. We first list the edges $e_1, e_2, e_3, \dots$ of $s^{-1}(v_0)$. Then we add a tail to $E$ at $v_0$ of the following form
\[\xymatrix{
v_0 \ar[r]^{f_1} & v_1 \ar[r]^{f_2} & v_2 \ar[r]^{f_3}  & v_3 \ar@[.>][r] & \cdots 
}
\]

We remove the edges in $s^{-1}(v_0)$, and for every $e_j \in s^{-1}(v_0)$ we draw an edge $g_j$ from $v_{j-1}$ to $r(e_j)$. 

For example, if $E$ is a graph with one vertex with infinitely many loops, then its desingularization is 

\[
\xymatrix{
. \ar[r]^{f_1} \ar@(ul,dl)[]_{g_1} & . \ar[r]^{f_2} \ar@(d,d)[l]^{g_2} & . \ar[r] \ar@(d,d)[ll]^{\;\;g_3} &
. \ar[r] 
\ar@(d,d)[lll] & . \ar[r] \ar@(d,d)[llll] & \ar@(d,d)[lllll] \cdots\\
&&&&&\\
}
\]

\subsection{Main Theorem}

We are in a position to prove the main theorem of the note.

\begin{theorem}\label{sthfin3}
Let $E$ be an arbitrary graph.  Then $\LL(E)$ is a graded von Neumann regular ring. 
\end{theorem}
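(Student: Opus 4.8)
The plan is to follow the roadmap sketched in the introduction, reducing step by step from arbitrary graphs down to the one class where we have a concrete normal form, namely finite graphs with no sources, which are corner skew Laurent polynomial rings. Concretely, I would carry out the argument in four stages of increasing generality. \textbf{Stage 1: finite graphs.} First I would show that deleting a source from a finite graph yields a graded Morita equivalent Leavitt path algebra: if $v$ is a source, then $v\LL(E)v$ together with the idempotent $1-v$ exhibits $\LL(E\setminus v)$ as a full corner of $\LL(E)$ (up to a shift $\overline\delta$), so by Theorem~\ref{pardosuggi} the two algebras are graded Morita equivalent, and by Proposition~\ref{cafejen4may} graded regularity transfers. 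Iterating, we may assume $E$ is a finite graph with no sources. By \S\ref{calgaryplaza}, such an $\LL(E)$ is $\LL(E)_0[t_+,t_-,\phi]$; by Proposition~\ref{lanhc8} it is graded regular provided $\LL(E)_0$ is von Neumann regular, and $\LL(E)_0$ is an ultramatricial ($K$-)algebra — a direct limit of finite products of matrix algebras over $K$ — hence regular (cf.\ the proof of~\cite[Theorem~5.3]{amp}). This settles all finite graphs.

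\textbf{Stage 2: row-finite graphs.} A row-finite graph $E$ is the directed union of its finite complete subgraphs $E_\lambda$ (those generated by a finite set of vertices closed under ranges of emitted edges), and $\LL(E)=\varinjlim \LL(E_\lambda)$ with the connecting maps graded. Since graded von Neumann regularity is clearly preserved under direct limits (solve $xyx=x$ at a finite stage where $x$ lives), $\LL(E)$ is graded regular. \textbf{Stage 3: countable graphs.} Here I would use desingularization (\S\ref{hngtrere66}): if $E$ is countable and $F$ is a desingularization of $E$, then $F$ is row-finite, so $\LL(F)$ is graded regular by Stage~2. The point, drawing on the flexibility of the $\Gamma$-grading from \S\ref{hngtrere}, is that one can choose a (non-canonical) grading on $\LL(E)$ — assigning to each edge $e_j\in s^{-1}(v_0)$ the weight matching the composite path $f_1\cdots f_{j-1}g_j$ in $F$ — so that the natural map $\LL(E)\hookrightarrow \LL(F)$ is a graded monomorphism onto a (graded) corner generated by the homogeneous idempotent $\sum_{v\in E^0} v$. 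A graded corner $p\LL(F)p$ of a graded regular ring is graded regular (for homogeneous $x=pxp$, pick homogeneous $y$ with $xyx=x$ and replace $y$ by $pyp$), so $\LL(E)$ with this grading is graded regular; since graded regularity of a ring does not depend on which compatible grading group we use (the defining condition "for every homogeneous $x$ there is $y$ with $xyx=x$" only refers to homogeneous components, and the canonical $\mathbb Z$-grading refines / is refined by the auxiliary one appropriately), $\LL(E)$ is graded regular in the canonical grading as well.

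\textbf{Stage 4: arbitrary graphs.} By Goodearl's result~\cite{goodearllpa}, $\LL(E)$ for an arbitrary graph $E$ is a direct limit of Leavitt path algebras $\LL(E_i)$ of countable graphs $E_i$, with graded connecting maps; applying Stage~3 and the direct-limit stability of graded regularity once more finishes the proof. The step I expect to be the main obstacle is Stage~3: verifying that the desingularization embedding is genuinely a \emph{graded} monomorphism requires choosing the auxiliary grading carefully and checking that the images of the ghost edges and the CK relations are homogeneous and land where expected, and one must also be careful that passing from the auxiliary grading back to the canonical $\mathbb Z$-grading really does preserve the "graded regular" conclusion. The reductions in Stages 1, 2 and 4 are comparatively routine once Proposition~\ref{cafejen4may}, Proposition~\ref{lanhc8}, and the direct-limit remark are in hand.
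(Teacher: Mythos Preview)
Your four-stage strategy is exactly the paper's, and all the key ingredients (source elimination via full corners and Proposition~\ref{cafejen4may}, the corner skew Laurent realisation and Proposition~\ref{lanhc8}, direct limits for row-finite and for uncountable graphs, desingularization with an auxiliary grading for the countable case) line up with the paper's proof.

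Two points need tightening. In Stage~1, your claim that deleting any source gives a graded Morita equivalent algebra is false when the source $v$ is also a sink (an isolated vertex): then $1-v$ is \emph{not} full, since the CK relation $v=\sum_{s(f)=v} f\, r(f)\, f^*$ you implicitly use is vacuous. The paper treats this case separately via the graded direct sum decomposition $\LL(E)\cong_{\gr} K\oplus \LL(E_{\backslash v})$, which still transfers graded regularity. In Stage~3, when $E^0$ is infinite the element $\sum_{v\in E^0} v$ does not exist in $\LL(F)$, so there is no single corner $p\LL(F)p$. The paper replaces this with the increasing union $\phi(\LL(E))=\bigcup_n \nu_n \LL(F)\nu_n$ where $\nu_n=\sum_{i\le n} v_i$; each $\nu_n\LL(F)\nu_n$ is a genuine graded corner, hence graded regular, and graded regularity passes to the union. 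With these two adjustments your argument is complete and coincides with the paper's.
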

\begin{proof}
We prove the theorem first for finite graphs with a finite number of edges. Since a row-finite graph is a direct limit of finite graphs, the theorem follows for Leavitt path algebras associated to row-finite graphs from the finite case. We then use this along with the desingularization to extend the theorem for arbitrary graphs.

\noindent {\bf I. Finite graphs with a finite number of edges.}
 
Let $E$ be a finite graph. First note that if a graph $E$ has an isolated vertex $v$, then $E_{\backslash v}$ is the graph obtained by removing $v$ from $E$. Then  we have 
$\LL(E)\cong_{\gr} K \oplus \LL(E_{\backslash v})$, where $K$ is a graded ring concentrated in degree $0$. It is now easy to see that $\LL(E)$ is graded regular if and only if $\LL(E_{\backslash v})$ is graded regular. 

Next, suppose that $E$ is a graph with at least two vertices. Let $v \in E^0$ be a source which is not a sink (i.e, it is not an isolated vertex). Then we show that 
$\LL(E)$ is graded Morita equivalent to $\LL(E_{\backslash v})$. 

Since $E_{\backslash v}$ is a complete subgraph of $E$, there is a  (non-unital) graded algebra homomorphism $\phi:\LL(E_{\backslash v}) \rightarrow \LL(E)$, such that $\phi(u)=u$, $\phi(e)=e$ and $\phi(e^*)=e^*$, where $u \in E^0 \backslash \{v\}$ and $e\in E^1 \backslash  \{f \in E^1\mid s(f)=v\}$.  
The graded uniqueness theorem~\cite[Theorem~4.8]{tomforde} implies  $\phi$ is injective. Thus  $\LL(E_{\backslash v})  \cong_{\gr} 
\phi(\LL(E_{\backslash v}))$. It is not difficult to see that $\phi(\LL(E_{\backslash v})) =p\LL(E) p$, where $p=\sum_{u \in E_{\backslash v}^0}u$. 
This immediately implies that $\LL(E_{\backslash v})$ is  graded Morita equivalent to $p\LL(E)p$. On the other hand, the (graded) ideal generated by $p=\sum_{u \in E_{\backslash v}^0}u$ coincides with the (graded) ideal generated by $\{u \mid u \in E_{\backslash v}^0 \}$. We show that the graded ideal $I$ generated by $\{u \mid u \in E_{\backslash v}^0 \}$ contains $v$. For \[v=\sum_{\{f \in E^1\mid s(f)=v\}}f r(f) f^*,\] where the sum is over a nonempty set (as $v$ is not an isolated vertex). But $r(f) \in I$. Thus $v \in I$ and so $1=\sum_{u \in E^0}u \in I$. That is $I=\LL(E)p\LL(E)=\LL(E)$.  
This shows that $p$ is a full homogeneous idempotent in $\LL(E)$. 
Thus $p\LL(E)p$ is graded Morita equivalent to $\LL(E)$ (see~\cite[\S 2, Example~2.3.1]{hazgrmon}). Consequently, $\LL(E_{\backslash v})$
is graded Morita equivalent to $\LL(E)$. 
Since by Proposition~\ref{cafejen4may} the graded regular property is graded Morita invariant, it follows 
that $\LL(E)$ is graded regular if and only if $\LL(E_{\backslash v})$ is graded regular. 

 Now let $E'$ be a graph with at least one vertex obtained from $E$ by repeatedly removing all the sources (and thus isolated vertices). An easy induction shows that $E'$ is either an isolated vertex or a graph with no sources.  The preceding argument now shows that $\LL(E)$ is graded regular if and only if $\LL(E')$ is graded regular. 

To finish the proof, we consider two cases. 
If $E'$ is an isolated vertex, then $\LL(E')\cong_{\gr} K$ and so it is a graded regular ring. Thus $\LL(E)$ is graded regular.

If $E'$ is a graph with no sources, then $\LL(E')$ can be realised as a corner skew  Laurent polynomial ring  $\LL(E')=\LL(E')_0[t_{+},t_{-},\phi]$ as in \S\ref{calgaryplaza}.  But $\LL(E')_0$, being an ultramatricial algebra, is a regular ring (see the proof of~\cite[Theorem~5.3]{amp}). Thus by Proposition~\ref{lanhc8}, $\LL(E')$ is graded regular and  consequently $\LL(E)$ is graded regular. 

\noindent {\bf II. Row-finite graphs.}

 Let $E$ be a row-finite graph. Since any row-finite graph is a direct limit of finite graphs (these are complete subgraphs with complete graph homomorphisms), it follows that $\LL(E)$ is a direct limit of (graded) Leavitt path algebras associated  to finite graphs (see~\cite[Lemmas~3.1, 3.2]{amp}). Since a direct limit of graded regular rings is a graded regular, from (I), it follows that $\LL(E)$ is graded regular. 

\noindent {\bf III. Countable graphs.}

Let $E$ be a countable arbitrary graph which has infinite emitters. Let $F$ be a fixed desingularization graph $E$ (see~\S\ref{hngtrere66}). 
We assign a (non-canonical) grading to $\LL(E)$ as follows (see~\S\ref{hngtrere}). For $v\in E^0$, set $\deg(v)=0$. For $e\in E^1$, where $s(e)$ is regular, set $\deg(e)=1$ and $\deg(e^*)=-1$.  Let $e\in E^1$, where $s(e)$ is an infinite emitter. In the process of obtaining the desingularization graph $F$, we would have named 
$e$ as $e_i$ for some $i \geq 1$, so that the ``substitute'' for the edge $e = e_i$ of $E$ is the path $f_1\dots f_{i-1}g_i$ in $F$. We then define $\deg(e)=i$ and $\deg(e^*)=-i$. This induces a $\mathbb Z$-grading on $\LL(E)$. Consider the canonical grading of $\LL(F)$. By~\cite[Proposition~5.1]{aarbitrary}, the assignment  
\begin{align*}
\phi: \LL(E) & \longrightarrow \LL(F),\\
v\phantom{^*} &\longmapsto v,\\
e\phantom{^*} &\longmapsto e, \qquad & s(e) \text { is regular, }\\
e^* &\longmapsto e^*, \qquad & s(e) \text { is regular, }\\
e_i &\longmapsto f_1\dots f_{i-1}g_i, \qquad & s(e_i) \text { is infinite emitter, }\\
e^*_i &\longmapsto g_i^* f_{i-1}^*\dots f_{1^*}, \qquad & s(e_i) \text { is infinite emitter, }\\
\end{align*}
induces an monomorphism. Note that with the grading assigned to $\LL(E)$ and $\LL(F)$ above, $\phi$ is a graded morphism. List the vertices of $E$ as 
$E^0=\{v_i\}_{i\in \mathbb N}$ if $|E^0|=\infty$ and $E^0=\{v_1,\dots,v_n\}$ otherwise. 
Notice that, 
\[\nu_n \LL(F) \nu_n \subseteq \nu_{n+1} \LL(F) \nu_{n+1},\] and 
\begin{equation}\label{omjhiu}
\phi(\LL(E)) =\bigcup \nu_n \LL(F) \nu_n,
\end{equation}
where $\nu_n=\sum_{1\leq i\leq n}v_n$ (here, $v_i\in \LL(F)$ is the image of the corresponding vertex of $E$ under $\phi$).  
Since $F$ is a row-finite graph, by (II), $\LL(F)$ is a graded regular ring. Now since each $\nu_n$ is an idempotent, $\nu_n \LL(F) \nu_n$ is graded regular, and thus $\bigcup \nu_n \LL(F) \nu_n$ is graded regular. Since $\phi$ is a graded monomorphism, (\ref{omjhiu}) implies that $\LL(E)$ is graded regular with the given non-canonical grading defined above. Finally suppose that a ring $R$ has two different gradings, say a $\Gamma$-grading and $\Omega$-grading such that their homogeneous elements coincides. Then it is easy to see that $\Gamma$-graded ring $R$ is graded regular if and only if $\Omega$-graded ring $R$ is graded regular. Using this, it follows that $\LL(E)$ is graded regular with the canonical grading of Leavitt path algebras as well.

\noindent {\bf IV. Arbitrary graphs.}

Let $E$ be an uncountable graph. Then by~\cite[Proposition~2.7]{goodearllpa}, $E$ is the direct limit of its countable  CK-subgraphs (see~\cite[\S2.3]{goodearllpa}), and consequently $\LL(E)$ is the direct limit of the (graded) $\LL(F)$ over countable CK-subgraphs $F$ of $E$. By (III), each of $\LL(F)$ is graded regular. Thus $\LL(E)$, being their direct limit, is graded regular. 
This completes the proof. 
  \end{proof}

\subsection{Conclusion}

A Leavitt path algebra is a $\mathbb Z$-graded ring with a set of homogeneous local units (the set of finite sums of distinct vertices) and further if the number of vertices of the graph is finite, then the algebra is unital. Thus Theorem~\ref{sthfin3} gives that Leavitt path algebras are  graded von Neumann regular ring with a set of homogeneous local units. Thus all the statements of Propositions~\ref{pkhti1},~\ref{pkhti12} hold for Leavitt path algebras accordingly.

Finally, Leavitt path algebras are not (graded) unit-regular. As an example, consider the following graph:
\[
\xymatrix{
 \bullet\ar@(u,l)_{y_1} \ar@(u,r)^{y_2}}
\]
Then it is easy to see that there is no homogeneous invertible element $x$ such that $y_1 x y_1=y_1$.


\begin{thebibliography}{10}

\bibitem{aap05} G. Abrams, G. Aranda Pino, \emph{The Leavitt path algebra of a graph,} J. Algebra {\bf 293} (2005), no. 2, 319--334.


\bibitem{aarbitrary}  G. Abrams, G. Aranda Pino, \emph{The Leavitt path algebras of arbitrary graphs}, Houston J. Math. {\bf 34} (2008), no. 2, 423--442. 


\bibitem{abramsranga} G. Abrams, K.M. Rangaswamy,  \emph{Regularity conditions for arbitrary Leavitt path algebras},  Algebr. Represent. Theory {\bf 13}  (2010), no. 3, 319--334.















\bibitem{amp} P. Ara, M.A. Moreno, E. Pardo, \emph{Nonstable $K$-theory for graph algebras,} Algebr. Represent. Theory {\bf 10} (2007), no. 2, 157--178.









\bibitem{arafrac}  P. Ara, M.A. Gonz‡lez-Barroso, K.R. Goodearl, E. Pardo, \emph{Fractional skew monoid rings,} J. Algebra {\bf 278} (2004), no. 1, 104--126.













\bibitem{drinentom} D. Drinen, M. Tomforde, \emph{The $C^*$-algebras of arbitrary graphs}, Rocky Mountain J. Math
{\bf 35} (1) (2005), 105--135.



 \bibitem{goodearllpa} K.R. Goodearl, \emph{Leavitt path algebras and direct limits}, Contemp. Math. {\bf 480} (2009), 165--187.

\bibitem{goodearlbook} K.R. Goodearl, Von Neumann regular rings, 2nd ed., Krieger Publishing Co., Malabar, FL, 1991.










\bibitem{hazgrmon} R. Hazrat, \emph{Graded rings and graded Grothendieck groups}, \url{rhazrat.wordpress.com/preprint}.











\bibitem{grrings}  C. N\u ast\u asescu, F. van Oystaeyen, Graded Ring Theory, North-Holland, Amsterdam, 1982.








\bibitem{tomforde} M. Tomforde, \emph{Uniqueness theorems and ideal structure for Leavitt path algebras} J. Algebra {\bf 318} (2007), no. 1, 270--299.







\end{thebibliography}
\end{document}